\newtheorem{thm}{Theorem}[section]
\newtheorem{prp}[thm]{Proposition}
\newtheorem{lem}[thm]{Lemma}
\newtheorem{cor}[thm]{Corollary}
\newtheorem{conj}[thm]{Conjecture}
\theoremstyle{definition}
\newtheorem{dfn}{Definition}[section]
\newtheorem{rem}[dfn]{Remark}
\newtheorem{exl}[dfn]{Example}
\newcommand{\st}{:\;}
\def\R{{\mathbb R}}%
\newcommand{\Red}{\R^d}
\renewcommand{\phi}{\varphi}
\newcommand{\ball}[1]{\mathbf{B}^{#1}}
\providecommand{\parenth}[1]{\left(#1\right)}%
\providecommand{\braces}[1]{\left\{#1\right\}}%
\newcommand{\iprod}[2]{\left\langle#1,#2\right\rangle}%
\def\polar{\circ}
\newcommand{\polarset}[1]{{#1}^{\polar}}%
\newcommand{\conv}{\mathrm{conv}}%
\providecommand{\parenth}[1]{\left(#1\right)}%
\providecommand{\braces}[1]{\left\{#1\right\}}%
\newcommand{\vol}[1]{\operatorname{vol}\nolimits_{#1}}%
\newcommand{\truncf}[1]{
\left(#1 \right)_{\! \! +}
}%
\newcommand{\Href}[2]{\hyperref[#2]{#1~\ref{#2}}}
\title{Quantitative Steinitz theorem and polarity}
\author{Grigory Ivanov\address{Grigory Ivanov: 
Pontifícia Universidade Católica do Rio de Janeiro \\
Departamento de Matemática,
Rua Marquês de São Vicente, 225\\
Edif{\'i}cio Cardeal Leme, sala 862,
22451-900 G{\'a}vea, Rio de Janeiro, Brazil}
\email{grimivanov@gmail.com}}
\thanks{The author is supported by Projeto Paz and Coordenacao de Aperfeicoamento de Pessoal de Nivel Superior - Brasil (CAPES) - 23038.015548/2016-06}
\subjclass[2020]{52A27 (primary), 52A35}
\keywords{ sparse approximation, coarse approximation, Carath\'eodory lemma}
\begin{document}

\begin{abstract}

The classical Steinitz theorem asserts that if the origin lies within the interior of the convex hull of a set $S \subset \mathbb{R}^d$, then there are at most $2d$ points in $S$ whose convex hull contains the origin within its interior. B\'ar\'any, Katchalski, and Pach established a quantitative version of Steinitz's theorem, showing that for a convex polytope $Q$ in $\mathbb{R}^d$ containing the standard Euclidean unit ball $\mathbf{B}^d$, there exist at most $2d$ vertices of $Q$ whose convex hull $Q'$ satisfies $r\mathbf{B}^d \subset Q' $ with $r \geq d^{-2d}$. Recently, M\'arton Nasz\'odi and the author derived a polynomial bound on $r$.

This paper aims to establish a bound on $r$ based on the number of vertices of $Q.$
 In other words, we demonstrate an effective method to remove several points from the original set $Q$ without significantly altering the bound on $r$. Specifically, if the number of vertices of $Q$ scales linearly with the dimension, i.e., $\alpha d$, then one can select $2d$ vertices such that $r \geq \frac{1}{5 \alpha d}$. The proof relies on a polarity trick, which may be of independent interest: we demonstrate the existence of a point $c$ in the interior of a convex polytope $P \subset \mathbb{R}^d$ such that the vertices of the polar polytope $(P-c)^\circ$ sum up to zero.
\end{abstract}

\maketitle

\section{Introduction}
The goal of this paper is to establish a quantitative version of the following classical result of  
E. Steinitz \cite{steinitz1913bedingt}.
\begin{prp}[Steinitz theorem]
Let the origin belong to the interior of the convex hull of a set $S \subset \Red.$ 
Then there are at most $2d$ points of $S$ whose convex hull contains the origin in the interior. 
\end{prp}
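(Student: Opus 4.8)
The plan is to induct on the dimension $d$, reducing it at each step by passing to the orthogonal complement of a small simplex produced by Carath\'eodory's lemma. At the start of each step I would make the harmless reduction that $0\notin S$: since $0$ is interior to $\conv S$ it is not an extreme point, so $0\in\conv(S\setminus\{0\})$ and $\conv(S\setminus\{0\})=\conv S$. The base case $d=1$ is immediate --- if $0$ is interior to the interval $\conv S\subseteq\R$, then $S$ contains a strictly positive and a strictly negative element, and those two points suffice, $2=2d$.

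For the inductive step, apply Carath\'eodory's lemma to $0\in\conv S$ and discard the points of weight $0$: this yields an affinely independent set $X=\{x_1,\dots,x_{k+1}\}\subseteq S$ and weights $\lambda_i>0$ with $\sum\lambda_i=1$ and $\sum\lambda_i x_i=0$. Since $0\in\mathrm{aff}(X)$, the set $L:=\mathrm{span}(X)=\mathrm{aff}(X)$ is a linear subspace of dimension $k\ge 1$, and $0$ lies in the relative interior of the $k$-simplex $\conv X$. If $k=d$ we are done, as $|X|=d+1\le 2d$. Otherwise let $\pi\colon\R^d\to L^{\perp}$ be the orthogonal projection; a ball about $0$ inside $\conv S$ projects onto a ball about $0$ inside $\pi(\conv S)=\conv\pi(S)$, so $0$ is interior to $\conv\pi(S)$ within the $(d-k)$-dimensional space $L^{\perp}$. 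By the inductive hypothesis there are at most $2(d-k)$ points of $\pi(S)$ whose convex hull has $0$ in its interior relative to $L^{\perp}$; choosing a preimage in $S$ of each gives $Y\subseteq S$ with $|Y|\le 2(d-k)$ and $0$ in the relative interior of $\conv\pi(Y)$.

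I would then set $T:=X\cup Y$, so that $|T|\le(k+1)+2(d-k)=2d-k+1\le 2d$, and verify $0\in\mathrm{int}\,\conv T$. If this failed, some unit vector $w$ would satisfy $\iprod{w}{t}\le 0$ for all $t\in T$; writing $w=w_L+w_{\perp}$ with $w_L\in L$ and $w_{\perp}\in L^{\perp}$, the constraints from $X\subseteq L$ give $\iprod{w_L}{x_i}=\iprod{w}{x_i}\le 0$ for each $i$, and then $0=\iprod{w_L}{0}=\sum\lambda_i\iprod{w_L}{x_i}\le 0$ with all $\lambda_i>0$ forces every $\iprod{w_L}{x_i}=0$, i.e.\ $w_L\perp\mathrm{span}(X)=L$, so $w_L=0$. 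Hence $w=w_{\perp}\in L^{\perp}\setminus\{0\}$, and for $y\in Y$ we get $\iprod{w}{y}=\iprod{w_{\perp}}{\pi(y)}\le 0$, contradicting that $0$ is in the relative interior of $\conv\pi(Y)$ inside $L^{\perp}$. This contradiction completes the step.

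The crux --- and the reason one cannot simply mimic the proof of Carath\'eodory's lemma by repeatedly discarding a point from a minimal configuration --- is that lying in the \emph{interior} of a convex hull is not preserved under such deletions, which is precisely why the bound degrades from $d+1$ to $2d$. The projection trick circumvents this by splitting any alleged separating direction $w$ into a component along $L$, annihilated by the Carath\'eodory simplex $X$, and a component in $L^{\perp}$, annihilated by the recursively constructed $Y$. Everything else should be routine: the reduction to $0\notin S$ (which guarantees $k\ge 1$, so the recursion really does lower the dimension), and the fact that an orthogonal projection carries a ball about $0$ onto a ball about $0$ in the image subspace.
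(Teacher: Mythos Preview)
The paper does not prove this proposition: it is the classical Steinitz theorem, stated with a citation to \cite{steinitz1913bedingt} as background for the quantitative results that follow, and no argument for it appears anywhere in the paper. So there is nothing to compare your proof against.

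That said, your argument is correct. The induction on $d$ via a Carath\'eodory simplex $X$ spanning a $k$-dimensional subspace $L$, followed by orthogonal projection to $L^{\perp}$ and recursion, is one of the standard proofs, and your separating-hyperplane verification that $0\in\mathrm{int}\,\conv(X\cup Y)$ is clean and complete. The counting $(k+1)+2(d-k)=2d-k+1\le 2d$ uses $k\ge 1$, which you correctly ensure via the reduction to $0\notin S$.

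One small expository wrinkle: the justification ``$0$ is not an extreme point, so $0\in\conv(S\setminus\{0\})$'' is true but not quite immediate as stated, since non-extremality only says $0$ is a combination of other points of $\conv S$, not of $S$ itself. For arbitrary (possibly infinite) $S$ the cleanest route is to first pass to a finite subset: pick finitely many points of $S$ whose convex hull already contains each of $\pm\epsilon e_1,\dots,\pm\epsilon e_d$ (possible since each of these $2d$ points lies in $\conv S$ and hence in the convex hull of finitely many elements of $S$); then $0$ is interior to that finite subhull, and for a finite set the extreme-point reasoning goes through via Minkowski's theorem. This is a routine patch and does not affect the substance of your proof.
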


The first quantitative version of this result was obtained in \cite{barany1982quantitative}, where the authors showed that for a convex polytope $Q$ in $\mathbb{R}^d$ containing the standard Euclidean unit ball $\mathbf{B}^d$, there exist at most $2d$ vertices of $Q$ whose convex hull $Q'$ satisfies $r(d)\mathbf{B}^d \subset Q' $ with $r(d) \geq d^{-2d}$.

With the exception of the planar case $d = 2$ \cite{kirkpatrick1992quantitative, brass1997quantitative, barany1994exact}, no significant improvement on $r(d)$ had been obtained until recently (see also \cite{de2017quantitative}).
M\'arton Nasz\'odi and the author derived the first polynomial lower bound
$r(d) \geq \frac{1}{6d^2}$ in \cite{ivanov2024quantitative}, and extended this result to a spherical version in \cite{ivanov2023quantitative}.
The current working conjecture is that $r(d) \geq \frac{\alpha}{\sqrt{d}}$ for some positive constant $\alpha$.

The main result of the paper is as follows.
\begin{thm}\label{thm:Steinitz_dropping_out_points}
Let $Q $ be a set of $m$ points of $\R^d$ such that its convex hull $\conv{\ \!Q}$  contains the Euclidean unit ball $\ball{d}.$   Then there is $Q^\prime \subset Q$ of size at most $2d$ satisfying
$\conv{\ \! Q^\prime} \supset r \ball{d},$ where $r = \frac{1}{2(m+d)+1}.$
\end{thm}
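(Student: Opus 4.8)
The plan is to pass to polars so that the \emph{points} of $Q$ become \emph{facets}, apply the polarity trick announced in the abstract to make the configuration ``balanced'', run a Steinitz-type selection on the resulting balanced polytope, and transport the conclusion back.

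First, set $P:=\conv(Q)$, a polytope with at most $m$ vertices, all of them in $Q$, and with $\ball{d}\subseteq P$. Then $P^{\polar}$ is a polytope with $0\in\mathrm{int}(P^{\polar})\subseteq P^{\polar}\subseteq\ball{d}$ whose facets are exactly the sets $\{x\in P^{\polar}\st\iprod{q}{x}=1\}$ with $q$ a vertex of $P$; in particular $P^{\polar}$ has at most $m$ facets. By bipolarity, producing $Q'\subseteq Q$ with $|Q'|\le 2d$ and $\conv(Q')\supseteq r\ball{d}$ is equivalent to producing $2d$ facets of $P^{\polar}$ whose defining halfspaces meet in a subset of $\tfrac1r\ball{d}$. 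Now apply the polarity trick to $P^{\polar}$: there is $p\in\mathrm{int}(P^{\polar})$ so that the vertices of $W:=\polarset{(P^{\polar}-p)}$ sum to $0$. These vertices correspond to the facets of $P^{\polar}-p$, hence to the facets of $P^{\polar}$, hence to the vertices of $P$; write them $w_1,\dots,w_N$ with $N\le m$, and, after writing each facet of $P^{\polar}-p$ as $\{x\st\iprod{w_i}{x}=1\}$, note $w_i=\tfrac{q_i}{1-\iprod{q_i}{p}}$ for the matching $q_i\in Q$ (the denominator is positive because $p\in\mathrm{int}(P^{\polar})$). Since $|p|<1$ we have $P^{\polar}-p\subseteq 2\ball{d}$, hence $W\supseteq\tfrac12\ball{d}$. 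So $W$ is a polytope with $N\le m$ vertices which sum to $0$, and $\tfrac12\ball{d}\subseteq W$.

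The crux is a balanced form of quantitative Steinitz: \emph{if $W=\conv\{w_1,\dots,w_N\}\subseteq\Red$ satisfies $\sum_i w_i=0$ and $\rho\ball{d}\subseteq W$, then at most $2d$ of the $w_i$ have convex hull containing $\tfrac{\rho}{N+d}\ball{d}$.} I would attack this by an extremal exchange argument: among all index sets $I$ with $|I|\le 2d$ and $0\in\mathrm{int}\,\conv\{w_i\st i\in I\}$ --- a nonempty family, by Steinitz's theorem --- pick one for which the radius $\rho_I$ of the largest ball around $0$ contained in $\conv\{w_i\st i\in I\}$ is maximal. If $\rho_I<\tfrac{\rho}{N+d}$, choose a unit vector $u$ with $\max_{i\in I}\iprod{w_i}{u}=\rho_I$ and an index $k$ with $\iprod{w_k}{u}=\max_j\iprod{w_j}{u}\ge\rho$; then $k\notin I$, and I would show that swapping a suitably chosen $w_j$ ($j\in I$) for $w_k$ keeps $0$ in the interior while strictly increasing the inradius, contradicting maximality. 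The hypothesis $\sum w_i=0$ is precisely what should guarantee that a beneficial swap always exists and that the gain is of order $\rho/N$, which is where the linear dependence on $N$ comes from. An alternative is an iterated-deletion argument, removing vertices one at a time and using $w_k=-\sum_{i\ne k}w_i$ to confine the part of the ball cut off when $n$ vertices remain to a slab of thickness $O(\rho/n)$. I expect making this swap (or deletion) step precise, with the stated constant, to be the main obstacle.

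Finally, transport back. If the selection gives $S=\{w_{i_1},\dots,w_{i_{2d}}\}$ with $\conv(S)\supseteq\rho'\ball{d}$ and $\rho'\ge\tfrac{\rho}{N+d}\ge\tfrac{1}{2(m+d)}$ (using $\rho\ge\tfrac12$ and $N\le m$), then from $w_i=\tfrac{q_i}{1-\iprod{q_i}{p}}$ one checks $\polarset{\conv(S)}=\{y\st\iprod{q_{i_j}}{y+p}\le 1\text{ for all }j\}=\polarset{\conv\{q_{i_1},\dots,q_{i_{2d}}\}}-p$, whence $\polarset{\conv\{q_{i_1},\dots,q_{i_{2d}}\}}\subseteq p+\tfrac1{\rho'}\ball{d}\subseteq(1+\tfrac1{\rho'})\ball{d}$, and one more polarity gives $\conv\{q_{i_1},\dots,q_{i_{2d}}\}\supseteq\tfrac{1}{1+1/\rho'}\ball{d}=\tfrac{\rho'}{1+\rho'}\ball{d}$. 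Since $\tfrac1r=1+\tfrac1{\rho'}\le 1+2(m+d)$, the set $Q':=\{q_{i_1},\dots,q_{i_{2d}}\}$ works, with $r=\tfrac{1}{2(m+d)+1}$.
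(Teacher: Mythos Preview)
Your overall architecture---pass to $P^{\polar}$, recenter via the polarity trick so that the vertices of $W=(P^{\polar}-p)^{\polar}$ sum to zero, select $2d$ vertices there, and transport back---is exactly the paper's, and your transport-back computation and final arithmetic are correct. The gap is the selection step, which you yourself flag as the main obstacle. Neither sketch closes it: in the exchange argument, swapping in a vertex that is far in direction $u$ may well shrink the inradius in some other direction, and nothing in your setup forces the \emph{optimal} $\rho_{I}$ to be at least $\rho/(N+d)$ just from ``a beneficial swap exists''; in the deletion argument, the sum-zero relation is destroyed after the first removal, so the slab estimate has no inductive footing.

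The paper's selection is constructive and short. Among the vertices of $W$, pick $w_{1},\dots,w_{d}$ maximizing the volume of $S=\conv\{0,w_{1},\dots,w_{d}\}$; the standard max-volume inclusion gives $W\subset -2dS+(w_{1}+\dots+w_{d})$. Since the vertices of $W$ sum to zero, the centroid of the remaining $N-d$ vertices equals $p_{0}:=-(w_{1}+\dots+w_{d})/(N-d)$, and by a Carath\'eodory variant one finds $w_{d+1},\dots,w_{2d}$ among those remaining vertices with $p_{0}\in\conv\{0,w_{d+1},\dots,w_{2d}\}$. Rearranging the convex-combination identity for $p_{0}$ shows $0\in\conv\{w_{1},\dots,w_{2d}\}$, hence also $S$ and $p_{0}$ lie in $\conv\{w_{1},\dots,w_{2d}\}$, and then
\[
\tfrac12\ball{d}\subset W\subset -2dS-(N-d)p_{0}\subset -2d\,\conv\{w_{1},\dots,w_{2d}\}-(N-d)\,\conv\{w_{1},\dots,w_{2d}\}=-(N+d)\,\conv\{w_{1},\dots,w_{2d}\},
\]
so $\rho'\ge\tfrac{1}{2(N+d)}\ge\tfrac{1}{2(m+d)}$, after which your final polarity step finishes the proof.
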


Starting with the breakthrough \cite{BSS14}, which led to new results in the area of quantitative combinatorial convexity (see \cite{de2017quantitative}, \cite{naszodi2016proof}, \cite{Brazitikos2016Diam}, \cite{brazitikos2018polynomial}, \cite{brazitikos2017brascamp}, \cite{fernandez2022continuous}), one approach to the problems was to initially identify more than $2d$ points (facets, subsets) with desired properties, typically linear in the dimension, and then select the best $2d$ among them. It is worth noting that in some cases, eliminating additional objects poses challenges \cite{damasdi2021colorful}. The next corollary, which trivially follows from the main result,  facilitates this process in the case of the Quantitative Steinitz  theorem.

\begin{cor}
Let $Q$ be a set of $\alpha d,$ $\alpha > 1,$ points of $\R^d$ such that  its convex hull $\conv{\ \!Q}$  contains the ball $\lambda \ball{d}.$ 
Then there are at most $2d$ points of $Q$ whose convex hull
$Q^\prime$ satisfies
\[
\frac{\lambda}{5 \alpha d} \ball{d} \subset Q^\prime.
\]  
\end{cor}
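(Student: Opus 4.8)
The plan is to obtain the corollary as an immediate consequence of Theorem~\ref{thm:Steinitz_dropping_out_points} via a rescaling. First I would normalize the radius: since $\conv{\ \!Q} \supset \lambda\ball{d}$, the dilated set $\tilde Q := \lambda^{-1} Q$ satisfies $\conv{\ \!\tilde Q} \supset \ball{d}$, and it still consists of $m := cd$ points. Applying Theorem~\ref{thm:Steinitz_dropping_out_points} to $\tilde Q$ produces a subset $\tilde Q^\prime \subset \tilde Q$ with $\abs{\tilde Q^\prime} \le 2d$ and $\conv{\ \!\tilde Q^\prime} \supset r\ball{d}$, where $r = \frac{1}{2(m+d)+1} = \frac{1}{2d(c+1)+1}$. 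Scaling back, the set $Q^\prime := \lambda\,\tilde Q^\prime \subset Q$ has at most $2d$ points and satisfies $\conv{\ \!Q^\prime} \supset \lambda r\,\ball{d}$.

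It then suffices to verify the numerical estimate $\lambda r \ge \frac{\lambda}{5cd}$, i.e. $\frac{1}{2d(c+1)+1} \ge \frac{1}{5cd}$. Cross-multiplying, this is equivalent to $5cd \ge 2d(c+1)+1$, that is, $d(3c-2) \ge 1$. Since $c > 1$ gives $3c-2 > 1$ and $d \ge 1$, the left-hand side is strictly larger than $1$, so the inequality holds (in fact with room to spare). Hence $\conv{\ \!Q^\prime} \supset \frac{\lambda}{5cd}\ball{d}$, as claimed.

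There is no genuine obstacle here beyond bookkeeping: all the geometric content is already contained in Theorem~\ref{thm:Steinitz_dropping_out_points}, and the only thing to be careful about is tracking the dilation factor $\lambda$ through the argument and checking that the elementary inequality $d(3c-2) \ge 1$ is exactly what is needed to pass from the sharper bound $r = \frac{1}{2(m+d)+1}$ with $m = cd$ to the convenient form $\frac{1}{5cd}$.
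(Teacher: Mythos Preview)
Your proof is correct and is exactly the trivial derivation the paper has in mind: the paper gives no separate proof of this corollary, merely stating that it ``trivially follows from the main result,'' and your rescaling plus the elementary inequality $d(3c-2)\ge 1$ is precisely that derivation. The only cosmetic point is that in the corollary $Q^\prime$ denotes the convex hull of the chosen points rather than the set of points itself, but this does not affect the argument.
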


As another elementary corollary of the main result, we will get a slightly worse polynomial bound in the Quantitative Steinitz theorem than the quadratic one obtained in \cite{ivanov2024quantitative}.
\begin{cor}\label{cor:QST_five_halfs}
Let $Q$ be a convex polytope  in $\Red$ containing the Euclidean unit ball $\ball{d}.$
Then there are at most $2d$ vertices of $Q$ whose convex hull
$Q^\prime$ satisfies
\[
\frac{d^{-\frac{5}{2}} }{7} \ball{d} \subset Q^\prime.
\]  
\end{cor}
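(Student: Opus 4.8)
The plan is to deduce Corollary~\ref{cor:QST_five_halfs} from Theorem~\ref{thm:Steinitz_dropping_out_points} by a preliminary pruning step: although a polytope $Q$ may have arbitrarily many vertices, I would first extract a subset $S$ of its vertices with $\abs{S}\le 2d^{2}$ whose convex hull still contains a Euclidean ball of radius $d^{-1/2}$, and then apply Theorem~\ref{thm:Steinitz_dropping_out_points} to $S$ (after rescaling). The point is that in that theorem the bound on $r$ depends on the \emph{number} of points, which after pruning is under control.

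For the pruning, fix the standard orthonormal basis $e_{1},\dots,e_{d}$ of $\R^{d}$. Since $\ball{d}\subset Q$, the origin is interior to $Q$, so for each $i$ and each sign the ray $\R_{\ge 0}(\pm e_{i})$ leaves the bounded polytope $Q$ at a unique point $p_{i}^{\pm}=t_{i}^{\pm}(\pm e_{i})\in\partial Q$ with $t_{i}^{\pm}\ge 1$. Each $p_{i}^{\pm}$ lies in a proper face of $Q$, hence in a polytope of dimension at most $d-1$, so by Carath\'eodory's theorem $p_{i}^{\pm}$ is a convex combination of at most $d$ vertices of $Q$; I let $V_{i}^{\pm}$ be such a set of vertices and put $S=\bigcup_{i,\pm}V_{i}^{\pm}$, so that $\abs{S}\le 2d\cdot d=2d^{2}$ and $S$ consists of vertices of $Q$. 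Since $0$ lies on the segment $[p_{1}^{-},p_{1}^{+}]\subset\conv S$ and $p_{i}^{\pm}\in\conv V_{i}^{\pm}\subset\conv S$, the point $\pm e_{i}=\frac{1}{t_{i}^{\pm}}p_{i}^{\pm}+(1-\frac{1}{t_{i}^{\pm}})\cdot 0$ lies in $\conv S$; hence the cross-polytope $\conv\{\pm e_{1},\dots,\pm e_{d}\}$ is contained in $\conv S$. As the support function of this cross-polytope in a unit direction $\theta$ equals $\max_{i}\abs{\theta_{i}}\ge\tfrac{1}{\sqrt d}$, I conclude $\tfrac{1}{\sqrt d}\ball{d}\subset\conv S$.

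Then I apply Theorem~\ref{thm:Steinitz_dropping_out_points} to the rescaled set $\sqrt d\,S$, whose convex hull contains $\ball{d}$, with $m=\abs{S}\le 2d^{2}$. This yields a subset of at most $2d$ points of $\sqrt d\,S$, that is, at most $2d$ vertices of $Q$ after rescaling back, whose convex hull $Q'$ satisfies $Q'\supset \tfrac{1}{\sqrt d}\cdot\tfrac{1}{2(m+d)+1}\ball{d}$. Finally $\sqrt d\bigl(2(m+d)+1\bigr)\le\sqrt d\,(4d^{2}+2d+1)\le 7d^{5/2}$, the last inequality being $4d^{2}+2d+1\le 7d^{2}$, i.e.\ $(3d+1)(d-1)\ge 0$, which holds for all $d\ge 1$; hence $Q'\supset\frac{d^{-5/2}}{7}\ball{d}$, as claimed.

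The argument is essentially bookkeeping once Theorem~\ref{thm:Steinitz_dropping_out_points} is available; the two places that require a little care are (i) using that a boundary point of $Q$ is a convex combination of only $d$ vertices (not $d+1$), which is exactly what keeps $\abs{S}$ at $2d^{2}$ and hence the final constant at $7$ rather than something larger, and (ii) the small-dimensional check of $4d^{2}+2d+1\le 7d^{2}$ (for $d=2$ one may instead invoke the known sharper planar bounds). I do not foresee a genuine obstacle beyond these.
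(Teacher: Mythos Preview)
Your proposal is correct and follows essentially the same route as the paper's own proof: first prune to at most $2d^{2}$ vertices whose convex hull contains the cross-polytope (hence $\tfrac{1}{\sqrt d}\ball{d}$), then apply Theorem~\ref{thm:Steinitz_dropping_out_points}. The paper packages your boundary-point Carath\'eodory argument as Lemma~\ref{lem:Caratheodory_minus_point} (which gives $p\in\conv\{0,v_{1},\dots,v_{d}\}$ directly), but your version---hitting $\partial Q$ along $\pm e_{i}$ and invoking Carath\'eodory on a facet---is the same idea made explicit; your observation that $0\in[p_{1}^{-},p_{1}^{+}]$ is a clean way to get $0\in\conv S$, and the final inequality $4d^{2}+2d+1\le 7d^{2}$ already holds for all $d\ge 1$, so the parenthetical hedge about $d=2$ is unnecessary.
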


The key observation we will use to prove \Href{Theorem}{thm:Steinitz_dropping_out_points} is the following ``polarity trick.'' 

We recall that the \emph{polar} of a set $S \subset \Red$ is defined by
\[
\polarset{S} = \braces{x \in \Red \st \iprod{x}{s} \leq 1 \quad \text{for all} \quad s \in S}.
\]
\begin{thm}\label{thm:polarity_trick_sum_of_vertices}
Let  $P \subset \Red$ be a polytope with  non-empty interior.
Then there is a point $c$ in its interior such that 
the sum of vertices of $\polarset{\parenth{P-c}}$ is equal to zero.
\end{thm}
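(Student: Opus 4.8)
The plan is to realize the required point as the \emph{analytic center} of $P$: I will exhibit it as the minimizer of a logarithmic barrier function and identify the vanishing of that function's gradient with the condition that the vertices of the polar sum to zero.

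First, fix an irredundant facet description $P = \braces{x \in \Red \st \iprod{u_i}{x} \le h_i,\ i = 1,\dots,n}$, where $u_1,\dots,u_n \in \Sed$ are the outer unit normals of the facets of $P$; since $P$ is a bounded full-dimensional polytope, the vectors $u_i$ span $\Red$ (in fact they positively span it). For $c$ in the interior of $P$ put $b_i(c) = h_i - \iprod{u_i}{c} > 0$. Then $P - c$ contains the origin in its interior, and by polytope duality $\polarset{\parenth{P-c}} = \conv\braces{u_i / b_i(c) \st i = 1,\dots,n}$, the points $u_i/b_i(c)$ being precisely its vertices (the facets of $P-c$ correspond bijectively to the vertices of its polar, and the description is irredundant, so no $u_i/b_i(c)$ is redundant). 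Thus the theorem asks for a point $c$ in the interior of $P$ with $\sum_{i=1}^{n} u_i / b_i(c) = 0$.

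Next, introduce the barrier $\Phi(c) = -\sum_{i=1}^{n} \ln b_i(c)$ on the interior of $P$. A direct computation gives $\nabla \Phi(c) = \sum_{i=1}^{n} u_i / b_i(c)$ and Hessian $\sum_{i=1}^{n} b_i(c)^{-2}\, u_i u_i^{\mathsf{T}}$, which is positive definite because the $u_i$ span $\Red$; hence $\Phi$ is strictly convex. It therefore suffices to show that $\Phi$ attains a minimum at an interior point of $P$, since there its gradient — the sum of the vertices of $\polarset{\parenth{P-c}}$ — must vanish, and strict convexity will in addition give uniqueness of that point.

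The crux of the argument, and the step that needs care, is coercivity of $\Phi$ at $\partial P$: I claim $\Phi(c) \to +\infty$ whenever $c \to \partial P$ from the interior. Any boundary point lies on some facet $i$, so along such a sequence $b_i(c) \to 0^{+}$ and $-\ln b_i(c) \to +\infty$; meanwhile each $b_j(c)$ is bounded above on the bounded set $P$, so every term $-\ln b_j(c)$ stays bounded below, and the single blowing-up term dominates. Consequently the sublevel set of $\Phi$ through any fixed interior point is closed in $\Red$ and contained in $P$, hence compact, so $\Phi$ attains its global minimum at some $c^{\ast}$ in the interior of $P$; first-order optimality gives $\nabla \Phi(c^{\ast}) = 0$, which is exactly the assertion.
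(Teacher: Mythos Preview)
Your proof is correct and takes essentially the same approach as the paper: the paper (after translating so that the origin lies in the interior of $P$) maximizes the product $F(x)=\prod_i \truncf{1-\iprod{x}{v_i}}$, which up to an additive constant is exactly $-\Phi$ in your notation, and reads off the same first-order condition. The only cosmetic differences are that you phrase the optimizer as the analytic center and argue existence via coercivity of $\Phi$ at $\partial P$, while the paper argues via compactness for $F$, and the paper also records the weighted version (arbitrary positive exponents on the factors).
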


In fact, we will show that for any positive weights, there is a point $c$ from the interior of $P$ such that the sum of vertices of $\polarset{\parenth{P-c}}$ with those weights is zero.
We will show that the corresponding point $c$ is a maximizer of a certain functional. Thus, our proof mimics the proof of the existence of the \emph{Santal{\'o} point} (see \cite{GruberBook, meyer1998santalo, lehec2009partitions, ivanov2021geometric}), which is a point $s$ inside a convex set $K \subset \R^d$ with non-empty interior such that the centroid of $\parenth{K-s}^\polar$ is the origin.

The author hopes that the bound on $r$ in Theorem 1.2 is not optimal, since it would contradict the conjectured lower bound for the quantitative version of the Steinitz theorem.

An interesting example, showing that even in the case of a set of $2d + 1$ points the conjectured bound on $r(d)$ is attained, was communicated to the author by Florian Grundbacher:
\begin{exl}
Define $p_{-} = - \sqrt{d} (e_d + e_1 + \dots + e_{d-1})$ and
$p_{+} = - \sqrt{d} (e_d - e_1 - \dots - e_{d-1}),$ wheere $e_1, \dots, e_d$ is the standard basis of $\R^d.$ 
Set $Q = \left\{\pm\sqrt{d} e_1, \dots, \pm \sqrt{d} e_{d-1}, \sqrt{d} e_d,
p_{-}, p_{+}  \right\}.$ Then $Q$ consists of $2d+1$ points and its convex hull contains the unit ball $\ball{d}.$ Moreover, any $Q^\prime \subset Q$ of size at most $2d$ does not contain the ball $r \ball{d}$ for any 
$r > \sqrt{\frac{d}{d^2 + d-1}}.$
\end{exl}

The rest of the paper is organized as follows:
In the next Section, we will explain the ideas behind the proof of the Quantitative Steinitz theorem obtained in \cite{ivanov2024quantitative} that can be traced back to  \cite{ivanov2022quantitative} and \cite{almendra2022quantitative}; we will try to show why \Href{Theorem}{thm:polarity_trick_sum_of_vertices} comes naturally as a development of those ideas. 
In \Href{Section}{sec:polarity_trick}, we will prove a more general version of \Href{Theorem}{thm:polarity_trick_sum_of_vertices}. Finally, in \Href{Section}{sec:proof_of_the_main_result} we  derive \Href{Theorem}{thm:Steinitz_dropping_out_points} and  its corollary.

\section{Useful lemmas}
\label{sec:usefull_lemmas}

We begin by elucidating the proof ideas of the Quantitative Steinitz theorem as outlined in \cite{ivanov2024quantitative}. The central strategy revolves around the careful application of polarity, employed twice in succession.

We started with a ``Steinitz-type picture'', wherein we considered a set $Q \subset \mathbb{R}^d$ whose convex hull contains the unit ball $\ball{d}$. Subsequently, we transitioned to an equivalent  ``Helly-type picture'' by examining the polar set $\polarset{Q}$ of $Q.$ This transformation allows us to reformulate the original problem into an equivalent Helly-type statement, justifying the name.
Now comes a trick:  we chose a point $c$  ``deep'' in $\polarset{Q}$ and considered 
$\polarset{\parenth{\polarset{Q} - c}}.$ So to say, this maneuver returns us to a ``Steinitz-type picture''  albeit a modified one, as we have altered our original set.
In essence, by manipulating the center of polarity, we achieve a more structurally organized convex polytope. We dub the resultant configuration following the second polarity transformation as ``Atlantis.'' 
For a new set within ``Atlantis'', we derived the desired polynomial bound utilizing a result from  \cite{almendra2022quantitative}. 
Finally, we demonstrated that reverting to the original  ``Steinitz-type picture'' 
does not significantly degrade our bound.

The crux of the proof lies in selecting the appropriate center $c$ of polarity during the second step. Notably, \Href{Theorem}{thm:polarity_trick_sum_of_vertices} offers a methodology for choosing an alternative point, which holds intrinsic interest in itself. 

Now, we are going to formalize a few statements.

For a positive integer $n,$ $[n]$ denotes the set $\{1, \dots, n\};$ $\ball{d}$ denotes the standard Euclidean unit ball in $\R^d;$ $\iprod{p}{x}$ denotes the inner product of $p$ and $x.$ We use $\truncf{a}$ to denote $\max\{a, 0\}.$

We start with an open problem.
In relation to volumetric Helly-type results, the author is interested in the following conjecture:
Macbeath \cite[Lemma 7.1]{macbeath1952theorem} showed that for a compact convex set $K \subset \R^d$ with non-empty interior, the function 
$f(x) = \vol{d} \left(K \cap (-K + 2x)\right)$ attains its maximum in a unique point of the interior of $K$ (here $\vol{d}$ denotes the $d$-dimensional volume on $\R^d$, as usual). Let us call this point the \emph{Macbeath point} of $K.$
\begin{conj}
The Macbeath point $p$ of a  compact convex set $K \subset \R^d$ with non-empty interior satisfies the inclusion
\[
K - p \subset -d (K-p). 
\]
\end{conj}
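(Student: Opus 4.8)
\medskip

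We outline the natural approach and the point at which we expect it to run into trouble.

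\emph{Step 1: a variational characterisation.} Write $S(x)=\braces{z\in\Red\st x+z\in K\text{ and }x-z\in K}=(K-x)\cap(x-K)$, so that $f(x)=\vol{d}\parenth{K\cap(2x-K)}=\vol{d}S(x)$. Since $S\parenth{(1-\lambda)x_{0}+\lambda x_{1}}\supseteq(1-\lambda)S(x_{0})+\lambda S(x_{1})$ by convexity of $K$, the Brunn--Minkowski inequality shows that $f^{1/d}$ is concave on $K$ and vanishes on $\partial K$; hence the Macbeath point $p$ is the \emph{unique} critical point of $f$ in $\mathrm{int}\,K$. Moving $x$ translates $K-x$ and $x-K$ at unit speed in opposite directions, and by the central symmetry of $S(x)$ the two first-variation contributions coincide, giving
\[
\nabla f(x)=-2\int_{\braces{y\in\partial K\,:\,2x-y\in\mathrm{int}\,K}}n_{K}(y)\,\di\mathcal{H}^{d-1}(y),
\]
$n_{K}$ the outer unit normal. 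Thus $p$ is characterised by the Euler--Lagrange condition $\int_{\partial K\cap\mathrm{int}(2p-K)}n_{K}\,\di\mathcal{H}^{d-1}=0$, equivalently, by Minkowski's identity $\int_{\partial K}n_{K}\,\di\mathcal{H}^{d-1}=0$, by
\[
\int_{\partial K\setminus(2p-K)}n_{K}\,\di\mathcal{H}^{d-1}=0 .
\]

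\emph{Step 2: from the Euler--Lagrange condition to asymmetry.} Normalising $p=0$, the claim is $h_{K}(u)\le d\,h_{K}(-u)$ for every unit $u$, where $h_{K}(u)=\max_{x\in K}\iprod{u}{x}$. One would fix a worst direction $u=e_{d}$, write $\partial K$ as the two graphs $x_{d}=g(x')$ and $x_{d}=\ell(x')$ over the projection $\pi(K)\subset\R^{d-1}$, and use the identity $\int_{\partial K}\phi\,\iprod{n_{K}}{e_{d}}\,\di\mathcal{H}^{d-1}=\int_{\pi(K)}\bigl[\phi(x',g(x'))-\phi(x',\ell(x'))\bigr]\,\di x'$ (projecting the surface measure onto $\{x_{d}=0\}$) to rewrite the $e_{d}$-component of the Euler--Lagrange condition; then, mimicking the classical cone-comparison proof that the centroid already satisfies $K-g\subset-d(K-g)$, one would use Brunn--Minkowski to reduce $K$ to an extremal one-parameter configuration and read off the bound.

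\emph{The main obstacle} is precisely this reduction, and carrying it out on the natural extremal configuration shows that the statement, as phrased, \emph{fails} for $d\ge 3$. Let $K$ be the cone over a centrally symmetric $(d-1)$-dimensional body, with base at height $0$ and apex at height $a$. By rotational symmetry its Macbeath point lies on the axis, and slicing perpendicular to $e_{d}$ gives, at relative height $\tau\in(0,\tfrac12)$, $f\propto(1-\tau)^{d}-(1-2\tau)^{d}$; maximising gives $\tau^{\ast}=\frac{1-\gamma}{2-\gamma}$ with $\gamma=2^{-1/(d-1)}$, so the asymmetry about the Macbeath point is
\[
\frac{1-\tau^{\ast}}{\tau^{\ast}}=\frac{1}{1-\gamma}=\bigl(1-2^{-1/(d-1)}\bigr)^{-1}\sim\frac{d}{\ln 2},
\]
which strictly exceeds $d$ for every $d\ge 3$ (it is $2+\sqrt2\approx 3.41$ already for $d=3$, and equals $d$ only when $d=2$). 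Thus the statement should presumably read $K-p\subset-\bigl(1-2^{-1/(d-1)}\bigr)^{-1}(K-p)$, with this cone as the conjectured extremiser; the substantive difficulty then becomes to show that among bodies of a given Minkowski asymmetry the cone is the one on which the Euler--Lagrange condition is hardest to meet --- and, technically, to reconcile the Euler--Lagrange condition (naturally expressed through the projection $\pi(K)$ and the boundary graphs $g,\ell$) with the Brunn--Minkowski concavity of the orthogonal slices $s\mapsto\vol{d-1}\parenth{K\cap\{x_{d}=s\}}^{1/(d-1)}$.
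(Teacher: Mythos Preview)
The statement you were asked to address is labelled a \emph{conjecture} in the paper and is explicitly presented there as an open problem; the paper offers no proof, so there is nothing to compare your argument against on that front.

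What your proposal actually does is not prove the conjecture but \emph{refute} it for every $d\ge 3$, and the counterexample is correct. Take $K$ to be the cone over a centrally symmetric base $B$ at height $0$ with apex at height $a$; by symmetry the Macbeath point lies on the axis. With $u=s/a$ and $\tau=h/a$ one has, for $0\le\tau\le\tfrac12$,
\[
f(\tau)\ \propto\ \int_{0}^{2\tau}\min\bigl(1-u,\;1-2\tau+u\bigr)^{d-1}\di u
\ =\ \frac{2}{d}\Bigl[(1-\tau)^{d}-(1-2\tau)^{d}\Bigr],
\]
whose unique critical point is $\tau^{\ast}=\dfrac{1-\gamma}{2-\gamma}$ with $\gamma=2^{-1/(d-1)}$. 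The axial asymmetry about the Macbeath point is therefore
\[
\frac{h_{K-p}(e_d)}{h_{K-p}(-e_d)}=\frac{1-\tau^{\ast}}{\tau^{\ast}}=\frac{1}{1-\gamma},
\]
which equals $d$ only when $d=2$, is $2+\sqrt{2}>3$ for $d=3$, and grows like $d/\ln 2$. Hence $K-p\not\subset -d(K-p)$ for $d\ge 3$, and the conjecture as stated is false.

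Two remarks. First, your Step~1 (log--concavity of $f$ and the surface--integral Euler--Lagrange condition) is correct but is not used in the refutation; the counterexample is a self-contained one-variable computation. Second, your suggested repair, replacing $d$ by $\bigl(1-2^{-1/(d-1)}\bigr)^{-1}$ with the cone as the conjectural extremiser, is a reasonable amendment, but it lies outside anything treated in the paper.
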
 
\begin{rem}
To the best of our knowledge, all known quantitative Carathéodory-type and Helly-type results require the consideration of a certain center ``deep'' inside a convex body. In the current manuscript, we use the point from Theorem 1.5. Other notable examples include the center of the John ellipsoid, the barycenter, the Santaló point, etc.  
The author believes that the Macbeath point might open a new direction for both volumetric Helly-type results and ``coarse'' approximations of convex bodies.  
\end{rem}

We formulate now the above-mentioned result from \cite{almendra2022quantitative}.

\begin{lem}\label{lem:max_volume-o-sipmex_inclustions}
Let $L$ be a bounded subset of $\R^d$ linearly spanning the whole space,
let $S = \conv\{0,v_1,\ldots,v_d\}$ be the maximal volume simplex among all simplices with $d$ vertices from  $L$ and one vertex at the origin. We use $P$ to denote the Minkowski sum of segments $[-v_i, v_i],$ $i \in [d].$ Then the following inclusions hold:
$$
        L \subset P \subset -2dS + (v_{1}+ \dots + v_{d}).
$$

\end{lem}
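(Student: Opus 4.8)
\textbf{Proof plan for \Href{Lemma}{lem:max_volume-o-sipmex_inclustions}.}

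The plan is to prove the two inclusions separately, both by exploiting the extremality of the simplex $S = \conv\{0, v_1, \dots, v_d\}$ via the volume functional, which I will express through determinants. Write $v_1, \dots, v_d$ as the columns of a matrix, so that $\vol{d} S = \frac{1}{d!}\abs{\det(v_1, \dots, v_d)}$, and note this is nonzero since $L$ spans $\R^d$ (so a maximal-volume such simplex is genuinely $d$-dimensional). The key elementary fact I will use repeatedly: if $w \in L$ and we replace $v_i$ by $w$, the resulting simplex has volume $\frac{1}{d!}\abs{\det(v_1, \dots, v_{i-1}, w, v_{i+1}, \dots, v_d)}$, and by maximality this is at most $\vol{d} S$. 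Expanding $w$ in the basis $v_1, \dots, v_d$ as $w = \sum_j \lambda_j v_j$, Cramer's rule gives $\det(v_1, \dots, w, \dots, v_d) = \lambda_i \det(v_1, \dots, v_d)$, hence $\abs{\lambda_i} \le 1$ for every $i$. This is the heart of the matter.

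For inclusion (1), take any $w \in L$ and write $w = \sum_{i=1}^d \lambda_i v_i$. The previous paragraph shows $\abs{\lambda_i} \le 1$ for all $i$, so $w = \sum \lambda_i v_i$ with each $\lambda_i v_i \in [-v_i, v_i]$; hence $w$ lies in the Minkowski sum $P = \sum_{i=1}^d [-v_i, v_i]$. Since $w \in L$ was arbitrary, $L \subset P$.

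For inclusion (2), I want to show $P - (v_1 + \dots + v_d) \subset -2d\, S$, i.e. every point of $P$, after translating by $-\sum v_i$, lies in $-2dS$. A generic point of $P$ has the form $x = \sum_{i=1}^d \mu_i v_i$ with each $\mu_i \in [-1,1]$; translating, $x - \sum v_i = \sum_{i=1}^d (\mu_i - 1) v_i = -\sum_{i=1}^d \alpha_i v_i$ where $\alpha_i = 1 - \mu_i \in [0,2]$. So it suffices to show $\sum_{i=1}^d \alpha_i v_i \in 2dS$ whenever all $\alpha_i \in [0,2]$. Now $2dS = \conv\{0, 2d v_1, \dots, 2d v_d\}$, which consists of all points $\sum \beta_i v_i$ with $\beta_i \ge 0$ and $\sum \beta_i \le 2d$; taking $\beta_i = \alpha_i$ we have $\alpha_i \ge 0$ and $\sum \alpha_i \le 2d$ since each $\alpha_i \le 2$ and there are $d$ of them. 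Hence $\sum \alpha_i v_i \in 2dS$, which gives $P \subset -2dS + (v_1 + \dots + v_d)$ as claimed.

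The only genuinely delicate point is the Cramer's-rule computation behind $\abs{\lambda_i} \le 1$ in inclusion (1) — one must be careful that $S$ has full dimension (guaranteed because $L$ linearly spans $\R^d$, so some $d$-subset of $L$ is a basis and the maximal volume is positive) so that the expansion coefficients $\lambda_j$ are well-defined and the determinant identity applies. Everything else is a routine unwinding of the definitions of Minkowski sum and of the simplex as a convex hull; I do not anticipate obstacles there.
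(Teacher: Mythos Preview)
Your proposal is correct and follows essentially the same route as the paper: express everything in coordinates with respect to the basis $v_1,\dots,v_d$, use the volume-maximality of $S$ (via a determinant/Cramer argument) to bound the coordinates of any $w\in L$ by $1$ in absolute value for inclusion~(1), and then verify inclusion~(2) by a direct coordinate comparison of $P$ and $-2dS + (v_1+\dots+v_d)$. Your write-up is in fact slightly more careful than the paper's sketch (which speaks loosely of ``vertices of $L$'' and invokes ``convexity'' unnecessarily).
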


Now we want to show that the whole way from ``Steinitz-type picture'' to ``Atlantis'' and back does not cost much in terms of the bound on the radius.  

Let $P$ be a polytope in $\R^d$ with a non-empty interior. 
It is well known that for any point $c$ of the interior of $P,$ there is a one-to-one correspondence between the facets of $P$ and the vertices $\polarset{\parenth{P - c}}.$
For two points $c_1$ and $c_2$ of the interior of $P,$ we will say that a vertex of $\polarset{\parenth{P - c_1}}$ and a vertex of $\polarset{\parenth{P - c_2}}$
are \emph{polar corresponding} if they correspond to the same facet of $P.$

\begin{lem}[Vertex correspondence]\label{lem:polar_vertex_correspondence}
Let  $P \subset \Red$ be a polytope containing the origin and a point $c$ in its interior. Denote $Q = \polarset{P}$ and 
$L = \polarset{(P-c)}.$ Then $v$ is a vertex of $Q$ if and only if 
$\frac{v}{ 1 - \iprod{c}{v}}$ is a vertex of $L.$ Moreover, the vertex $v$ of $Q$ and the vertex $\frac{v}{ 1 - \iprod{c}{v}}$ of $L$ are polar corresponding.
\end{lem}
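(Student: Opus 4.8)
The plan is to reduce the statement to the textbook duality between facets of a polytope and vertices of its polar, together with careful bookkeeping of the recentering by $c$. Recall that since the origin (and $c$) lie in the interior of $P$, the polar $Q = \polarset{P}$ is again a polytope with the origin in its interior, and a vector $v$ is a vertex of $Q$ exactly when the hyperplane $\braces{x \st \iprod{v}{x} = 1}$ supports $P$ along a facet; this is precisely the bijection between facets of $P$ and vertices of $Q$. The same applies verbatim to $P-c$ (whose interior contains the origin because $c \in \operatorname{int} P$) and to $L = \polarset{(P-c)}$, and moreover the facets of $P-c$ are exactly the translates $F-c$ of the facets $F$ of $P$. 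By definition, a vertex of $Q$ and a vertex of $L$ are polar corresponding precisely when the hyperplanes they encode cut $P$ along $F$ and $P-c$ along $F-c$ for one and the same facet $F$ of $P$.

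So the first step is to fix a facet $F$ of $P$ and write its supporting halfspace as $\iprod{v}{x} \le 1$ with equality exactly on $F$; then $v$ is, by the above, the vertex of $Q$ associated with $F$. Next I translate: for $y = x - c$ with $x \in P$ one has $\iprod{v}{x} \le 1$, that is, $\iprod{v}{y} \le 1 - \iprod{c}{v}$, with equality exactly when $x \in F$, i.e. exactly on $F-c$. Here the only point needing a word is that $t := 1 - \iprod{c}{v} > 0$: indeed $c$ lies in the interior of $P$ while the supporting hyperplane $\braces{x \st \iprod{v}{x} = 1}$ does not meet the interior of $P$, so $\iprod{c}{v} < 1$ strictly. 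Dividing the inequality by $t$ yields $\iprod{v/t}{y} \le 1$ for all $y \in P-c$, with equality exactly on the facet $F-c$; hence $v/t = \frac{v}{1 - \iprod{c}{v}}$ is precisely the vertex of $L$ associated with $F-c$, i.e. it is polar corresponding to $v$.

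It remains to repackage this as the asserted equivalence. The assignment $v \mapsto \frac{v}{1 - \iprod{c}{v}}$ carries every vertex of $Q$ to a vertex of $L$ that is polar corresponding to it; conversely, since every facet of $P-c$ has the form $F-c$ for a facet $F$ of $P$, running the same computation backwards — starting from a vertex $w$ of $L$ with supporting form $\iprod{w}{y} \le 1$ and setting $v = \frac{w}{1 + \iprod{c}{w}}$ (the denominator being positive because $-c \in \operatorname{int}(P-c)$ forces $\iprod{c}{w} > -1$) — shows that every vertex of $L$ arises this way, and that $w \mapsto \frac{w}{1+\iprod{c}{w}}$ inverts the forward map. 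Thus the map is a bijection between the vertex sets of $Q$ and $L$ matching polar corresponding vertices, which is exactly the claim. I do not expect a genuine obstacle: the whole content is the elementary facet/vertex duality plus the sign check $1 - \iprod{c}{v} > 0$, and the only thing to be careful about is that the recentering divides the supporting inequality by a \emph{positive} number — guaranteed by $c$ being an interior point — so that vertices are sent to vertices rather than to recession directions.
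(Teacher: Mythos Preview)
Your proof is correct and follows essentially the same approach as the paper: both arguments invoke the facet--vertex duality, translate the supporting half-space $\{x:\iprod{v}{x}\le 1\}$ by $-c$, observe that $1-\iprod{c}{v}>0$ because $c$ lies in the interior of $P$, and divide through to identify $\frac{v}{1-\iprod{c}{v}}$ as the vertex of $L$ corresponding to the translated facet. Your version is slightly more explicit in spelling out the inverse map $w\mapsto \frac{w}{1+\iprod{c}{w}}$, but this is a cosmetic addition rather than a different route.
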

\begin{proof}
A point $v$ is a vertex of $Q$ if and only if the half-space 
$H_v = \{x \in \Red \st \iprod{x}{v} \leq 1\}$ supports $P$ by  a facet.
The latter is true if and only if $H_v -c$ supports  $P -c$ in a  facet. On the other hand, 
since $c$ is in the interior of $P,$ $\iprod{c}{v} < 1,$ and thus,
\[
H_v - c = \{x \in \Red \st \iprod{x}{v} \leq 1\} - c = 
\{y \in \Red \st \iprod{y}{v} \leq 1 - \iprod{c}{v}\} =
\]
\[
\{y \in \Red \st \iprod{y}{\frac{v}{ 1 - \iprod{c}{v}}} \leq 1\}.
\]
Consequently, $\frac{v}{ 1 - \iprod{c}{v}}$ is a vertex of $L$ if and only if 
$v$ is a vertex of $Q.$
\end{proof}

\begin{lem}[Atlantis: There and back again]\label{lem:from_Atlantida_to_Steinitz}
Let  $P \subset \ball{d} \subset \Red$ be a polytope containing the origin and a point $c$ in its interior. Denote $K_1 = \polarset{P}$ and 
$K_2 = \polarset{(P-c)}.$ If some vertices $w_1, \dots, w_k$ of $K_2$ satisfy 
the inclusion $\conv{\{w_1, \dots, w_k\}} \supset  \lambda \ball{d}$ for some positive $\lambda,$ then
their polar corresponding vertices $v_1, \dots, v_k$ of $K_1$ satisfy
$\conv{\{v_1, \dots, v_k\}} \supset \frac{\lambda}{1+\lambda}\ball{d}. $
\end{lem}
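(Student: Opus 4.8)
The plan is to use the vertex correspondence from \Href{Lemma}{lem:polar_vertex_correspondence} to translate the hypothesis about $K_2 = \polarset{(P-c)}$ into a statement about $K_1 = \polarset{P}$, and then to exploit polar duality of the inclusion $\lambda\ball{d} \subset \conv\{w_1,\dots,w_k\}$. First I would record that, by \Href{Lemma}{lem:polar_vertex_correspondence}, each $w_i$ equals $\frac{v_i}{1 - \iprod{c}{v_i}}$ where $v_i$ is the polar corresponding vertex of $K_1$, and $1 - \iprod{c}{v_i} > 0$. The containment $\lambda \ball{d} \subset \conv\{w_1,\dots,w_k\}$ is equivalent, by polarity, to $\parenth{\conv\{w_1,\dots,w_k\}}^\polar \subset \frac{1}{\lambda}\ball{d}$, and $\parenth{\conv\{w_1,\dots,w_k\}}^\polar = \{x \st \iprod{x}{w_i}\le 1 \text{ for all } i\}$.

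The core computation is to compare the two polytopes $A = \{x \st \iprod{x}{v_i} \le 1 \ \forall i\}$ (which contains $\parenth{\conv\{v_1,\dots,v_k\}}^\polar$, in fact equals it) and $B = \{x \st \iprod{x}{w_i} \le 1 \ \forall i\}$. Writing $\iprod{x}{w_i} = \frac{\iprod{x}{v_i}}{1-\iprod{c}{v_i}}$, one sees that $x \in B$ iff $\iprod{x}{v_i} \le 1 - \iprod{c}{v_i}$ for all $i$, i.e. $\iprod{x+c}{v_i}\le 1$ for all $i$, which says exactly $x + c \in A$; hence $B = A - c$. Since $P \subset \ball{d}$, we have $\ball{d} \subset P^\polar = K_1$, so every $v_i \in \ball{d}$, whence $\abs{\iprod{c}{v_i}} \le \abs{c} < 1$ and moreover — this is the point where I must be a little careful — I need a bound on $\abs{c}$. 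Because $P \supset \{0\}$ and $P \subset \ball{d}$ is not by itself enough to bound $c$ away from the sphere cheaply; however $B = A - c \subset \frac1\lambda \ball{d}$ forces $A \subset \frac1\lambda\ball{d} + c$, and then taking polars: $\conv\{v_1,\dots,v_k\} = A^\polar \supset \parenth{\frac1\lambda \ball{d} + c}^\polar$. So the remaining task is purely to compute the polar of a translated ball.

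The final step is the elementary computation that $\parenth{\frac1\lambda\ball{d} + c}^\polar \supset \rho\,\ball{d}$ for $\rho = \frac{\lambda}{1+\lambda}$, given that $\abs{c} < 1$. Indeed $\rho x$ lies in the polar iff $\iprod{\rho x}{y} \le 1$ for all $y$ with $y = \frac1\lambda u + c$, $\abs{u}\le 1$; the worst case gives $\rho\parenth{\frac{\abs{x}}{\lambda} + \abs{c}\cdot\abs{x}} \le \rho\abs{x}\parenth{\frac1\lambda + 1}$ for $\abs{x}\le 1$, and this is $\le 1$ precisely when $\rho \le \frac{\lambda}{1+\lambda}$. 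The main obstacle I anticipate is bookkeeping the direction of the polarity inequalities — in particular making sure that $\parenth{\conv\{v_1,\dots,v_k\}}^\polar$ really equals the halfspace intersection $A$ (the $v_i$ all lie in $\ball{d}$, so $0$ is interior to their convex hull only if there are enough of them; but we do not need $0$ interior — the identity $\parenth{\conv S}^\polar = \bigcap_{s\in S}\{x\st\iprod xs\le1\}$ holds for any set $S$) and that taking polars reverses inclusions correctly. The bound $\abs{c}<1$ comes for free from $c \in \operatorname{int} P \subset \ball{d}$, so no separate estimate is needed.
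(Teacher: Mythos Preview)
Your argument is essentially the paper's own proof: both compute that $\{x:\iprod{x}{w_i}\le 1\ \forall i\}=A-c$ with $A=\{x:\iprod{x}{v_i}\le 1\ \forall i\}$, use $c\in P\subset\ball{d}$ to bound $A$, and take polars; the only cosmetic difference is that the paper passes directly to $A\subset\parenth{\tfrac{1}{\lambda}+1}\ball{d}$ before polarizing, whereas you polarize $\tfrac{1}{\lambda}\ball{d}+c$ and bound afterward. One slip to clean up: the line ``$\ball{d}\subset P^\polar=K_1$, so every $v_i\in\ball{d}$'' is false (the containment goes the wrong way for that conclusion), but you never actually use it---the positivity $1-\iprod{c}{v_i}>0$ already follows from \Href{Lemma}{lem:polar_vertex_correspondence}, and the final estimate only needs $\abs{c}\le 1$.
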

\begin{proof}
By \Href{Lemma}{lem:polar_vertex_correspondence}, the   vertices $v_1, \dots, v_k$ of $K_1$ polar corresponding to $w_1, \dots, w_k$ satisfy
$w_1 = \frac{v_1}{ 1 - \iprod{v_1}{c}}, \dots, w_k = \frac{v_k}{ 1 - \iprod{v_k}{c}}.$
Next, $\polarset{\parenth{\conv{\{w_1, \dots, w_k\}}}} \subset \frac{1}{\lambda}\ball{d}$ and
\[\polarset{\parenth{\conv{\{w_1, \dots, w_k\}}}} =  \bigcap\limits_{i \in [k]}
\braces{y \in \Red \st \iprod{y}{w_i} \leq 1} = 
\bigcap\limits_{i \in [k]}
\braces{y \in \Red \st \frac{\iprod{y}{v_i}}{ 1 - \iprod{v_i}{c}} \leq 1} =
\] 
\[
\bigcap\limits_{i \in [k]}
\braces{y \in \Red \st \iprod{y}{v_i} \leq 1 - \iprod{v_i}{c}}
=
\bigcap\limits_{i \in [k]}
\braces{y \in \Red \st \iprod{y +c}{v_i} \leq 1 }
=
\]
\[
\bigcap\limits_{i \in [k]}
\parenth{\braces{x \in \Red \st \iprod{x}{v_i} \leq 1 } -c}  = 
 -c + \bigcap\limits_{i \in [k]}
\braces{x \in \Red \st \iprod{x}{v_i} \leq 1 }.
\]
By the assumption of the lemma, $c \in P \subset \ball{d}.$ Hence,
$\bigcap\limits_{i \in [k]}
\braces{x \in \Red \st \iprod{x}{v_i} \leq 1 } \subset \parenth{\frac{1}{\lambda} +1} \ball{d}.$ Consequently, 
$\conv{\{v_1, \dots, v_k \}} = \polarset{\parenth{\bigcap\limits_{i \in [k]}
\braces{x \in \Red \st \iprod{x}{v_i} \leq 1 }}} \supset  \frac{\lambda}{1+\lambda}\ball{d}.$ The lemma is proven.
\end{proof}
Since the roles of the origin and $c$
above are symmetric, \Href{Lemma}{lem:from_Atlantida_to_Steinitz} allows to go both ways from ``Steinitz-type'' picture to ``Atlantis'' and back. 
For example, if $K_2 \supset \ball{d},$ then $K_1 \supset \frac{1}{2} \ball{d}.$

\section{Polarity trick}\label{sec:polarity_trick}
In this section, we prove the following result, which implies \Href{Theorem}{thm:polarity_trick_sum_of_vertices}.
\begin{thm}
\label{thm:polarity_trick_weighted_sum_of_vertices}
Let  $P \subset \Red$ be a polytope  with facets $F_1, \dots, F_n$ and non-empty interior.
For any positive weights $\alpha_1, \dots, \alpha_n,$ there is a point $c$ in the interior of $P$ such that 
\[
\sum \limits_{i \in [n]} \alpha_i w_i =0,
\]
where $w_i$ is  the vertex of $\polarset{\parenth{P-c}}$ corresponding to the facet $F_i$ of $P.$
\end{thm}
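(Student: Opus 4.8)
The plan is to mimic the proof of the existence of the Santal\'o point, as the authors suggest: we exhibit the desired center $c$ as the maximizer of a suitable concave (or log-concave) functional on the interior of $P$, and then read off the vanishing weighted sum of polar vertices from the first-order optimality condition. Concretely, for $c$ in the interior of $P$ let $w_i(c)$ denote the vertex of $\polarset{(P-c)}$ corresponding to the facet $F_i$. If $F_i$ lies on the hyperplane $\braces{x \st \iprod{u_i}{x} = b_i}$ with $b_i > 0$ (possible since the origin may be taken inside $P$ after a harmless translation, or one works directly with the affine description), then an explicit computation — essentially the content of \Href{Lemma}{lem:polar_vertex_correspondence} — gives $w_i(c) = \dfrac{u_i}{b_i - \iprod{u_i}{c}}$. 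The natural functional to consider is
\[
\Phi(c) = \sum_{i \in [n]} \alpha_i \log\!\parenth{b_i - \iprod{u_i}{c}},
\]
defined on the open polytope $\set{c \st \iprod{u_i}{c} < b_i \text{ for all } i}$, which is exactly the interior of $P$.

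The key steps, in order: (1) Check that $\Phi$ is strictly concave on $\mathrm{int}\,P$ — each summand $\log(b_i - \iprod{u_i}{c})$ is concave, and strict concavity of the sum follows because the normals $u_i$ of the facets of a full-dimensional polytope span $\R^d$. (2) Show $\Phi$ attains its maximum at an interior point: as $c$ approaches any facet $F_j$ of $P$, the term $\alpha_j \log(b_j - \iprod{u_j}{c}) \to -\infty$ while the remaining terms stay bounded above on the compact set $P$, so $\Phi(c) \to -\infty$; hence $\Phi$ has a (unique) maximizer $c^\ast$ in the interior. (3) Write the stationarity condition $\nabla \Phi(c^\ast) = 0$. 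Since $\nabla_c \log(b_i - \iprod{u_i}{c}) = -\dfrac{u_i}{b_i - \iprod{u_i}{c}} = -w_i(c)$, we get
\[
0 = \nabla \Phi(c^\ast) = -\sum_{i \in [n]} \alpha_i \, w_i(c^\ast),
\]
which is precisely the claimed identity $\sum_{i\in[n]} \alpha_i w_i = 0$ with $w_i = w_i(c^\ast)$. Taking all $\alpha_i = 1$ recovers \Href{Theorem}{thm:polarity_trick_sum_of_vertices}.

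I expect the only genuinely delicate point to be the bookkeeping in step (3): one must make sure that the vertices $w_i(c)$ of $\polarset{(P-c)}$ are correctly indexed by facets of $P$ for \emph{every} interior $c$ (so that the gradient computation is valid on the whole domain and not just generically), and that the hyperplane data $(u_i, b_i)$ is normalized consistently — in particular that each $b_i - \iprod{u_i}{c}$ is positive on $\mathrm{int}\,P$, which is what makes the logarithm well-defined and ties $\Phi$'s domain to $\mathrm{int}\,P$. The facet-vertex correspondence is standard for polytopes with the origin (or $c$) in the interior and is already invoked in \Href{Lemma}{lem:polar_vertex_correspondence}, so this is a matter of careful setup rather than a real obstacle. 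The strict concavity in step (1) and the properness (coercivity to $-\infty$ at the boundary) in step (2) are routine once the domain is correctly identified. Uniqueness of $c$, while not asserted in the statement, comes for free from strict concavity and may be worth recording.
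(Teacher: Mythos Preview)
Your proposal is correct and essentially identical to the paper's own proof. The paper first translates $P$ so that the origin is interior, writes the facets as $\iprod{v_i}{x}\le 1$, and maximizes $F(x)=\prod_i \truncf{1-\iprod{x}{v_i}}^{\alpha_i}$; your $\Phi$ is (up to an additive constant coming from the normalization $b_i=1$) precisely $\log F$, and your steps (1)--(3) match the paper's compactness/strict concavity/first-order argument, with the vertex identification $w_i(c)=\dfrac{u_i}{b_i-\iprod{u_i}{c}}$ being exactly \Href{Lemma}{lem:polar_vertex_correspondence}.
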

The key observation is the following:
\begin{lem}\label{lem:sum_of_vertexset_weighted}
Let  $P \subset \Red$ be a polytope containing the origin in its interior defined by the linear inequalites $\{x \in \Red \st \iprod{x}{v} \leq 1 \text{ for all } v\in Q\}$, for some finite $Q\subset\Red$.
For a given set of positive numbers 
$\beta_v > 0,$ $v \in  Q,$ we
introduce the function 
\[
F(x) = \prod\limits_{v \in Q} \truncf{1 -  \iprod{x}{v}}^{\beta_v}.
\]
 Then $F$ attains its maximum at a unique point $c$ of the interior of $P$ satisfying the identity
\[
\sum\limits_{v \in Q}\frac{\beta_v v}{1 - \iprod{c}{v}} =0.
\]
\end{lem}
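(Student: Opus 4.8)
The plan is to study the function $F(x) = \prod_{v \in Q} \truncf{1 - \iprod{x}{v}}^{\beta_v}$ on $\Red$ and show its maximizer is an interior point of $P$ satisfying the claimed identity. First I would observe that $F$ is continuous and nonnegative everywhere, vanishes outside the interior of $P$ (since on the boundary or outside at least one factor $\truncf{1 - \iprod{x}{v}}$ is zero), and is strictly positive at the origin because $0$ is in the interior of $P$, so $1 - \iprod{0}{v} = 1 > 0$ for all $v$. Since $P$ is bounded (it has non-empty interior and is cut out by finitely many inequalities — though I should note that a priori the inequalities $\iprod{x}{v} \le 1$ for $v \in Q$ need not define a bounded set; however the statement presupposes $P$ is a polytope, and in the application $Q$ will be a spanning set, so $P$ is compact), $F$ is a continuous function on the compact set $P$ attaining a positive maximum, and that maximum is attained in the interior by the vanishing-on-the-boundary observation.

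The next step is uniqueness and the critical-point equation. The cleanest route is to pass to $\log F(x) = \sum_{v \in Q} \beta_v \log\parenth{1 - \iprod{x}{v}}$ on the open set where all factors are positive, i.e. on the interior of $P$. Each summand $x \mapsto \log(1 - \iprod{x}{v})$ is strictly concave on this region (its Hessian is $-\frac{v v^{\top}}{(1-\iprod{x}{v})^2}$, which is negative semidefinite, and summing over a spanning set $Q$ yields a negative definite Hessian for $\log F$), so $\log F$ is strictly concave, giving a unique maximizer $c$. Setting the gradient of $\log F$ to zero at $c$ gives exactly
\[
\nabla \log F(c) = \sum_{v \in Q} \beta_v \cdot \frac{-v}{1 - \iprod{c}{v}} = 0,
\]
which is the desired identity after multiplying by $-1$.

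I expect the main obstacle — really the only subtle point — to be handling the possibility that $Q$ does not span $\Red$, so that the region defined by the inequalities is a cylinder and $F$ is not coercive / the Hessian of $\log F$ is merely negative semidefinite rather than definite. But the hypothesis that $P$ is a polytope (bounded) forces $Q$ to positively span $\Red$: if all $v \in Q$ lay in a closed halfspace $\{y : \iprod{y}{u} \le 0\}$ through the origin, then $P$ would contain the ray $\{tu : t \ge 0\}$, contradicting boundedness. Hence $Q$ linearly spans $\Red$, the Hessian of $\log F$ is negative definite on the interior of $P$, strict concavity and uniqueness follow, and the argument is complete. A final remark: once \Href{Lemma}{lem:sum_of_vertexset_weighted} is established, \Href{Theorem}{thm:polarity_trick_weighted_sum_of_vertices} follows by translating $P$ so that the origin is interior, writing $P = \{x : \iprod{x}{v} \le 1,\ v \in Q\}$ with $Q$ the vertex set of $\polarset{P}$ (each $v \in Q$ corresponding to a facet $F_i$), applying \Href{Lemma}{lem:polar_vertex_correspondence} which identifies the vertex of $\polarset{(P-c)}$ attached to $F_i$ as $\frac{v}{1 - \iprod{c}{v}}$, and choosing $\beta_v = \alpha_i$.
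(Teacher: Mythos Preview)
Your proposal is correct and follows essentially the same approach as the paper: observe that $F$ vanishes on the boundary of $P$ and is positive at the origin, use compactness for existence of a maximizer in the interior, pass to $\log F$ and use strict concavity for uniqueness, and read off the identity from the first-order (gradient) condition. You are in fact slightly more careful than the paper in justifying strict concavity (arguing that boundedness of $P$ forces $Q$ to span $\Red$, so the Hessian of $\log F$ is negative definite), a point the paper simply asserts.
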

\begin{proof}
Clearly, the function $F$ vanishes outside the interior of $P$.  
The function $F$ is smooth, and the identity
\[
F(x) = \prod\limits_{v \in Q} \parenth{1 -  \iprod{x}{v}}^{\beta_v}
\]
holds for any $x$ in the interior of $P$.  
By compactness, $F$ attains its maximum at a point $c$ in the interior of $P$.  
The function $\ln F$ is strictly concave on its support, which implies the uniqueness of $c$.  
By direct calculation, the gradient of $F$ is given by
\[
\nabla F(x) = F(x) \cdot \sum_{v \in Q} 
\frac{-\beta_v v}{1 - \iprod{x}{v}},
\]
which must vanish at the maximum point. This completes the proof of the lemma.
\end{proof}

\begin{proof}[Proof of \Href{Theorem}{thm:polarity_trick_weighted_sum_of_vertices} ]
Returning to our theorem, we shift $P$ in such a way that it contains the origin in its interior. Denote $Q  = \polarset{P}$ and let $v_i$ be the vertex of $Q$ corresponding to the facet $F_i.$
Applying \Href{Lemma}{lem:sum_of_vertexset_weighted} for 
$F(x) = \prod\limits_{i \in [m]} \truncf{1 - \iprod{x}{v_i}}^{\alpha_i},$ we get a point $c$ in the interior of $P$ that satisfies
\[
\sum\limits_{i \in [m]}\frac{\alpha_i v_i}{1 - \iprod{c}{v_i}} =0.
\]
By \Href{Lemma}{lem:polar_vertex_correspondence}, the sum is equal to 
$\sum\limits_{i \in [m]} {\alpha_i w_i},$ where $w_i$ is  the vertex of $\polarset{\parenth{P-c}}$ corresponding to the facet $F_i$ of $P.$ The proof of \Href{Theorem}{thm:polarity_trick_weighted_sum_of_vertices} is complete.
\end{proof}

\section{Proof of the main result and its corollary}
\label{sec:proof_of_the_main_result}

The following consequence of the Carath\'eodory lemma comes in handy. The proof can be found in \cite[Theorem 2.3]{barany2021combinatorial}.
\begin{lem}\label{lem:Caratheodory_minus_point}
Assume $b \in \R^d$ and a point $p$ belongs to the convex hull of a set $Q \subset \R^d.$
Then there are  $v_1, \dots, v_d$ (some of them might coincide) in $Q$ satisfying
$p \in \conv{\{b, v_1, \dots, v_d\}}.$ 
\end{lem}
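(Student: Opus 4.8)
\textbf{Proof proposal for Lemma~\ref{lem:Caratheodory_minus_point}.}
The plan is to reduce the statement to the classical Carath\'eodory lemma by passing to one higher dimension and coning off. First I would dispose of the trivial case $p=0$, where we may take $v_1=\dots=v_d$ to be any point of $Q$. Assuming $p\neq 0$, I would consider the ray $\rho=\{tp \st t\ge 0\}$ and look at where it enters the convex hull; more precisely, since $p\in\conv Q$, the segment $[0,p]$ meets $\conv Q$, and I want to find a face of $\conv Q$ encoding a Carath\'eodory representation that can be combined with the origin.

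The cleanest route is the standard ``homogenization'' trick. Embed $\R^d$ into $\R^{d+1}$ as the affine hyperplane $H=\{x_{d+1}=1\}$, sending $q\mapsto \hat q=(q,1)$ and $p\mapsto \hat p=(p,1)$. Then $p\in\conv Q$ in $\R^d$ means $\hat p\in\conv\{\hat q \st q\in Q\}$, hence $\hat p$ lies in the convex cone generated by $\{\hat q\}$; but I actually want to use Carath\'eodory for cones applied to the set $\{\hat q \st q\in Q\}\cup\{e_{d+1}\}$ where $e_{d+1}=(0,\dots,0,1)$ is the image of the origin of $\R^d$. Here is the key point: write $\hat p=\sum \lambda_j \hat q_j$ with $\lambda_j\ge 0$, $\sum\lambda_j=1$; this exhibits $\hat p$ in the cone spanned by the $d+1$-dimensional vectors $\hat q_j$. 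By the conical Carath\'eodory theorem in $\R^{d+1}$, $\hat p$ is a nonnegative combination of at most $d+1$ of the vectors $\hat q_j$. If at most $d$ of them suffice, we are essentially done; the remaining case is when exactly $d+1$ vectors $\hat q_1,\dots,\hat q_{d+1}$ are needed and they are linearly independent in $\R^{d+1}$. In that case I compare the last coordinate: $1=\sum_{j=1}^{d+1}\mu_j$ with $\mu_j>0$, so each $\mu_j<1$; I claim one can then ``push'' one coefficient to the boundary. Concretely, consider replacing one generator $\hat q_{d+1}$ by $e_{d+1}$: since $e_{d+1}$ lies in the interior of the cone over $\{\hat q_j\}$ (its last coordinate is positive and it is a positive combination — one checks $e_{d+1}=\sum (\mu_j/\sigma)\hat q_j$ fails in general, so instead I argue directly), the correct statement to invoke is that $\hat p$ lies in $\conv(\{\hat q_j\}\cup\{e_{d+1}\})$ and spans a $\le d$-dimensional face together with $e_{d+1}$.

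Rather than fight the dimension count, the slick argument I would actually write down is: apply Carath\'eodory's theorem in $\R^d$ not to $Q$ but to $Q\cup\{0\}$ — no, that gives $d+1$ points of $Q\cup\{0\}$, i.e.\ $0$ plus $d$ points of $Q$, which is \emph{exactly} the claim. Indeed $p\in\conv Q\subseteq\conv(Q\cup\{0\})$, and classical Carath\'eodory in $\R^d$ gives $p\in\conv\{z_0,\dots,z_d\}$ with each $z_i\in Q\cup\{0\}$. If none of the $z_i$ is $0$, then $p\in\conv\{z_0,\dots,z_d\}\subset\conv\{0,z_1,\dots,z_d\}$ and we relabel; if some $z_i=0$, say $z_0=0$, then $p\in\conv\{0,z_1,\dots,z_d\}$ with $z_1,\dots,z_d\in Q$, done; and if several $z_i$ equal $0$ we simply repeat some element of $Q$ to pad the list up to $d$ points (this is why the lemma allows the $v_j$ to coincide). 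So the whole proof is: \emph{apply Carath\'eodory to $Q\cup\{0\}$ and pad}. The main (very minor) obstacle is purely bookkeeping — handling the degenerate cases $p=0$ and repeated zeros so that one always ends with a list of exactly $d$ elements of $Q$ — which is precisely the role of the parenthetical ``some of them might coincide.''
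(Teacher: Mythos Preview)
Your final ``slick'' argument has a genuine gap. After applying Carath\'eodory to $Q\cup\{0\}$ you obtain $p\in\conv\{z_0,\dots,z_d\}$ with each $z_i\in Q\cup\{0\}$, and in the case where none of the $z_i$ is $0$ you assert $\conv\{z_0,\dots,z_d\}\subset\conv\{0,z_1,\dots,z_d\}$. This inclusion is simply false: you have swapped one vertex of a $d$-simplex for an unrelated point, and the new simplex need not contain the old one (e.g.\ in $\R^2$ take $z_0=(1,0)$, $z_1=(0,1)$, $z_2=(2,2)$; then $z_0\notin\conv\{0,z_1,z_2\}$). If instead ``relabel'' is meant to say that \emph{some} index $j$ can be dropped so that $p\in\conv(\{0\}\cup\{z_i:i\neq j\})$, that is precisely the lemma itself applied to $\{z_0,\dots,z_d\}$, and the argument is circular. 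The point is that Carath\'eodory on $Q\cup\{0\}$ is perfectly entitled to return a full-dimensional simplex with all $d+1$ vertices in $Q$, and then you have made no progress toward forcing $0$ into the representation.

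What is missing is one further reduction step that drops a $q$-vertex rather than $0$. Write $p=0\cdot 0+\sum_{i=0}^d\lambda_i z_i$ with the $z_i\in Q$ affinely independent (if Carath\'eodory returns at most $d$ points of $Q$ we are already done by padding). The $d+2$ points $0,z_0,\dots,z_d$ admit a nontrivial affine dependence $\nu_{-1}\cdot 0+\sum_i\nu_i z_i=0$, $\nu_{-1}+\sum_i\nu_i=0$; affine independence of the $z_i$ forces $\nu_{-1}\neq 0$, so after a sign flip $\nu_{-1}<0$ and hence $\sum_{i\ge 0}\nu_i>0$, so some $\nu_i>0$. Now run the standard elimination $\lambda_j\mapsto\lambda_j-t\nu_j$ with $t\ge 0$: the coefficient of $0$, namely $-t\nu_{-1}$, stays nonnegative, while at $t^*=\min_{\nu_i>0}\lambda_i/\nu_i$ some $\lambda_i$ with $i\ge 0$ vanishes. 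This yields $p\in\conv\{0,z_0,\dots,\hat z_i,\dots,z_d\}$. (Equivalently: minimize $\sum_i\mu_i$ over $\{\mu\ge 0:\sum_i\mu_i z_i=p\}$; the minimum is $\le 1$ and is attained at a vertex with at most $d$ nonzero coordinates.) This is the content of the argument the paper cites from \cite{barany2021combinatorial}; your abandoned cone-based approach was actually closer to it than the shortcut you settled on.
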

\begin{proof}[Proof of \Href{Theorem}{thm:Steinitz_dropping_out_points}]
Our proof follows the same structure as the proof of the quantitative Steinitz theorem in \cite{ivanov2024quantitative}, with the main difference being that we choose a “center” deep inside the body using the point from Theorem~1.5.

Set $P = \polarset{Q}$. By \Href{Theorem}{thm:polarity_trick_sum_of_vertices}, there is a point $c$ 
in the interior of $P$ such that   the  vertices of  $\polarset{(P-c)}$  sum up to zero.   Denote $L =\polarset{\parenth{P-c}}.$  

Using \Href{Lemma}{lem:from_Atlantida_to_Steinitz} with  $K_2 = Q$ and $K_1 = L,$ one sees that $ \frac{\ball{d}}{2} \subset L.$

Consider  $S = \conv\{0,w_1,\ldots,w_d\}$  the maximal volume simplex among all simplices with $d$ vertices from  $L$ and one vertex at the origin. 
Then the sum of all other vertices  of $L$ is equal to
$- (w_1 + \dots + w_d).$ And thus the centroid $p$ of all others is equal to
$- \frac{(w_1 + \dots + w_d)}{m-d}.$ 
Thus, by \Href{Lemma}{lem:Caratheodory_minus_point}, there are vertices $w_{d+1}, \dots, w_{2d}$ such that $p$ belongs to $\conv\{w_{d+1}, \dots, w_{2d}, b\},$ where $b = \frac{w_1 + \dots + w_d}{d} \in L.$
By construction, the convex hull of $\{w_{1}, \dots, w_{2d}\}$ contains the origin since it belongs to the segment with endpoints $p$ and $b.$ 

By \Href{Lemma}{lem:max_volume-o-sipmex_inclustions}, 
\[
 \frac{\ball{d}}{2} \subset L \subset -2d   \conv{\{0, w_1, \dots, w_d\}} + (w_1 + \dots + w_d) \subset
-2d\conv{\{ w_1, \dots, w_{2d}\}} - p (m-d) \subset
\]
\[
 -2d  \conv{\{w_1, \dots, w_{2d}\}}  -
(m-d)  \conv{\{w_1, \dots, w_{2d}\}}  = - (m +d)   \conv{\{ w_1, \dots, w_{2d}\}}.
\]
Thus, $\frac{\ball{d}}{2(m+d)} \subset \conv{\{w_1, \dots, w_{2d}\}},$ 
and by \Href{Lemma}{lem:from_Atlantida_to_Steinitz}  with  $K_2 = L$ and $K_1 = Q,$ one sees that the corresponding vertices  $v_1, \dots, v_{2d}$ of $Q$ satisfy 
$\frac{\ball{d}}{2(m+d)+1} \subset \conv{\{ v_1, \dots, v_{2d}\}}.$ 
\end{proof}

\begin{proof}[Proof of  \Href{Corollary}{cor:QST_five_halfs}]
The first step is to reduce the number of points to a quadratic in $d.$
It is easy to find $2d^2$ points of $Q$ such that their convex hull contains 
$\frac{\ball{d}}{\sqrt{d}}.$ Take an arbitrary standard cross-polytope inscribed in the unit ball $\ball{d},$ say the convex hull of vectors of the standard basis
$\{e_1, \dots, e_d\}$ of $\R^d$ and their opposites $\{-e_1, \dots, -e_d\}$. By \Href{Lemma}{lem:Caratheodory_minus_point}, for each point $p \in \{\pm e_1, \dots, \pm e_d\},$ there are  $d$ points, say $v_{1}, \dots, v_{d},$ of $Q$ with the property 
$p \in  \conv{\braces{0,v_{1}, \dots, v_{d}}}.$ The convex hull of the union of such $d$-tuples of points for all $p \in \{\pm e_1, \dots, \pm e_d\},$  contains the cross-polytope and hence contains the ball $\frac{\ball{d}}{\sqrt{d}}.$  

Now, it suffices to apply \Href{Theorem}{thm:Steinitz_dropping_out_points} to go from $2d^2$ points to $2d$ points whose convex hull contains the ball
$\frac{1}{2(2d^2 +d) +1} \cdot \frac{\ball{d}}{\sqrt{d}} \supset \frac{d^{-\frac{5}{2}}}{7} \ball{d} .$ 
\end{proof}

\section*{Acknowledgments}
The author extends gratitude to M{\'a}rton Nasz{\'o}di for insightful discussions and comprehensive help. Special thanks are also due to J{\'a}nos Pach, who facilitated our collaborative efforts with M{\'a}rton.

\bibliographystyle{alpha}

\begin{thebibliography}{DLLHRS17}

\bibitem[AHAK22]{almendra2022quantitative}
V{\'\i}ctor~Hugo Almendra-Hern{\'a}ndez, Gergely Ambrus, and Matthew Kendall.
\newblock Quantitative {H}elly-type theorems via sparse approximation.
\newblock \emph{Discrete \& Computational Geometry}, 67(2):460--471, 2022.

\bibitem[B{\'a}r21]{barany2021combinatorial}
Imre B{\'a}r{\'a}ny.
\newblock \emph{Combinatorial convexity}, volume~77.
\newblock American Mathematical Soc., 2021.

\bibitem[BH94]{barany1994exact}
Imre B{\'a}r{\'a}ny and Alad{\'a}r Heppes.
\newblock On the exact constant in the quantitative {S}teinitz theorem in the plane.
\newblock \emph{Discrete \& Computational Geometry}, 12(4):387--398, 1994.

\bibitem[BKP82]{barany1982quantitative}
Imre B{\'a}r{\'a}ny, Meir Katchalski, and J{\'a}nos Pach.
\newblock Quantitative {H}elly-type theorems.
\newblock \emph{Proceedings of the American Mathematical Society}, 86(1):109--114, 1982.

\bibitem[Bra97]{brass1997quantitative}
Peter Brass.
\newblock On the quantitative {S}teinitz theorem in the plane.
\newblock \emph{Discrete \& Computational Geometry}, 17(1):111--117, 1997.

\bibitem[Bra16]{Brazitikos2016Diam}
Silouanos Brazitikos.
\newblock Quantitative {H}elly-type theorem for the diameter of convex sets.
\newblock \emph{Discrete \& Computational Geometry}, 57(2):494--505, November 2016.

\bibitem[Bra17]{brazitikos2017brascamp}
Silouanos Brazitikos.
\newblock Brascamp--{L}ieb inequality and quantitative versions of {H}elly’s theorem.
\newblock \emph{Mathematika}, 63(1):272--291, 2017.

\bibitem[Bra18]{brazitikos2018polynomial}
Silouanos Brazitikos.
\newblock Polynomial estimates towards a sharp {H}elly-type theorem for the diameter of convex sets.
\newblock \emph{Bulletin of the Hellenic Mathematical Society}, 62:19--25, 2018.

\bibitem[BSS14]{BSS14}
Joshua Batson, Daniel~A. Spielman, and Nikhil Srivastava.
\newblock Twice-{R}amanujan sparsifiers.
\newblock \emph{SIAM Review}, 56(2):315--334, 2014.

\bibitem[DFN21]{damasdi2021colorful}
G{\'a}bor Dam{\'a}sdi, Vikt{\'o}ria F{\"o}ldv{\'a}ri, and M{\'a}rton Nasz{\'o}di.
\newblock Colorful {H}elly-type theorems for the volume of intersections of convex bodies.
\newblock \emph{Journal of Combinatorial Theory, Series A}, 178:105361, 2021.

\bibitem[DLLHRS17]{de2017quantitative}
Jes{\'u}s~A. De~Loera, Reuben~N. La~Haye, David Rolnick, and Pablo Sober{\'o}n.
\newblock Quantitative combinatorial geometry for continuous parameters.
\newblock \emph{Discrete \& Computational Geometry}, 57(2):318--334, 2017.

\bibitem[FVGM22]{fernandez2022continuous}
Tom{\'a}s Fernandez~Vidal, Daniel Galicer, and Mariano Merzbacher.
\newblock Continuous quantitative {H}elly-type results.
\newblock \emph{Proceedings of the American Mathematical Society}, 150(5):2181--2193, 2022.

\bibitem[Gru07]{GruberBook}
Peter~M. Gruber.
\newblock \emph{Convex and Discrete Geometry}.
\newblock Springer Berlin Heidelberg, 2007.

\bibitem[IN22]{ivanov2022quantitative}
Grigory Ivanov and M{\'a}rton Nasz{\'o}di.
\newblock A quantitative {H}elly-type theorem: containment in a homothet.
\newblock \emph{SIAM Journal on Discrete Mathematics}, 36(2):951--957, 2022.

\bibitem[IN25]{ivanov2023quantitative}
Grigory Ivanov and M{\'a}rton Nasz{\'o}di.
\newblock Quantitative {S}teinitz theorem: a spherical version.
\newblock \emph{Boletín de la Sociedad Matemática Mexicana}, 31(2):40, 2025.

\bibitem[IN24]{ivanov2024quantitative}
Grigory Ivanov and M{\'a}rton Nasz{\'o}di.
\newblock Quantitative {S}teinitz theorem: a polynomial bound.
\newblock \emph{Bulletin of the London Mathematical Society}, 56(2):796--802, 2024.

\bibitem[IW21]{ivanov2021geometric}
Grigory Ivanov and Elisabeth~M. Werner.
\newblock Geometric representation of classes of concave functions and duality.
\newblock \emph{Journal of Functional Analysis}, 283(9):109612, 2022.

\bibitem[KMY92]{kirkpatrick1992quantitative}
David Kirkpatrick, Bhubaneswar Mishra, and Chee-Keng Yap.
\newblock Quantitative {S}teinitz's theorems with applications to multifingered grasping.
\newblock \emph{Discrete \& Computational Geometry}, 7(3):295--318, 1992.

\bibitem[Leh09]{lehec2009partitions}
Joseph Lehec.
\newblock Partitions and functional {S}antal{\'o} inequalities.
\newblock \emph{Archiv der Mathematik}, 92(1):89--94, 2009.

\bibitem[Mac52]{macbeath1952theorem}
Alexander~M. Macbeath.
\newblock A theorem on non-homogeneous lattices.
\newblock \emph{Annals of Mathematics}, 56(2):269--293, 1952.

\bibitem[MW98]{meyer1998santalo}
Mathieu Meyer and Elisabeth Werner.
\newblock The {S}antal{\'o}-regions of a convex body.
\newblock \emph{Transactions of the American Mathematical Society}, 350(11):4569--4591, 1998.

\bibitem[Nas16]{naszodi2016proof}
M{\'a}rton Nasz{\'o}di.
\newblock Proof of a conjecture of B{\'a}r{\'a}ny, Katchalski and Pach.
\newblock \emph{Discrete \& Computational Geometry}, 55(1):243--248, 2016.

\bibitem[Ste13]{steinitz1913bedingt}
Ernst Steinitz.
\newblock Bedingt konvergente {R}eihen und konvexe {S}ysteme.
\newblock \emph{J. Reine Angew. Math.}, 143:128--176, 1913.

\end{thebibliography}

\end{document}